\newtheorem{theorem}{Theorem}
\newtheorem{lemma}[theorem]{Lemma}
\newtheorem{prop}[theorem]{Proposition}
\theoremstyle{definition}
\theoremstyle{remark}
\newcommand{\cay}[2]{{\sf Cay}(#1,#2)}
\newcommand{\Z}{{\mathbb Z}}
\newcommand{\aut}[1]{{\sf Aut}(#1)}
\newcommand{\F}{{\mathbb F}}
\newcommand{\comment}[1]{}
\newcommand{\rad}{\mathsf{Rad}}
\newcommand{\lcm}{\mathsf{l.c.m.}}
\title{A classification of  one dimensional affine rank three graphs}
\author{M. Muzychuk  }
\date{}
\begin{document}

\maketitle

\begin{abstract}
The rank three subgroups of a one-dimensional affine group over a finite field were classified in 1978 by Foulser and Kallaher. Although one can use their results for a classification of corresponding rank three graphs, the author didn't find such a classification neither in \cite{FK} nor in \cite{BM}. The goal of this note is to present such a classification. It turned out that graph classification is much simpler than the group one. More precisely, it is shown that the graphs in the title are either the Paley graphs or one of the graphs constructed by Van Lint and Schrijver or by  Peisert. Our approach 
is based on elementary group theory and does not use the classification of rank three affine groups.
\end{abstract}


\vspace*{10mm}

 Recall that a {\it rank three graph} is an undirected graph $\Gamma$ the automorphism group of which has rank three. So, the arc set of such a graph coincides with one of 2-orbits (orbitals) of its automorphism group.

Let $\F_q$ be a finite field of order $q=p^r$ where $p$ is a prime and $r$ is a positive integer. A graph $\Gamma$ with point set $\F_q$ will be called a {\it one dimensional affine rank three graph} if $\aut{\Gamma}\cap A\Gamma L(1,q)$ is a rank three group. In other words, the arc set of $\Gamma$ coincides with one of 2-orbits of a rank three subgroup $G \leq A\Gamma L(1,q)$. 
 
A rank three subgroup $G \leq A\Gamma L(1,q)$
contains a normal abelian subgroup which coincides with the additive group $\F_q^+$ of the field. It is a semidirect product $G = \F_q^+ G_0$ where $G_0$ is the stablilizer of the zero $0\in\F_q$. 
The group $G_0$ is contained in $\Gamma L_1(q)\cong\F_q^*\rtimes\langle \phi\rangle$ where $\phi:x\mapsto x^p$ is the Frobenius automorphism. Note that $G$ has rank three if and only if $G_0$ has two orbits in its natural action on $\F_q^*$. Let us denote these orbits as $O_1$ and $O_2$. Then the non-reflexive 2-orbits (orbitals) of  $G$ are Cayley graphs over $\F_q^+$ the connection sets of which are $O_1$ and $O_2$. As was mentioned before, all subgroups of $\Gamma L_1(q)$ with two orbits on $\F_q^*$ were classified in \cite{FK}. 
But the orbits of these subgroups were not described there. 
Since orbit partitions of different groups may coincide, it could happen that the number of such partitions is much less than the number of groups. So, the classification of orbit partitions might be easier than the one of groups.
Surprisingly, this is indeed the case. In this note 
we provide a complete classification of the orbits of two orbit affine subgroups. Our approach is straightforward and is not based on the classification of affine rank three groups obtained in \cite{FK}.


In this note the group $\Gamma L_1(q)$ is considered as a permutation group
in its natural action on $\F_q^*$. It is generated by permutations $\phi$ and $\hat{\alpha},\alpha\in\F_q^*$ where $\hat{\alpha}$ stands for the mapping $x\mapsto x\alpha,x\in\F_q^*$. Note that the mapping $\alpha\mapsto\hat{\alpha}\in A\Gamma L_1(q)$ is a group monomorphism and its image $\widehat{\F_q^*}$ is a cyclic subgroup of $A\Gamma L_1(q)$ which acts regularly on $\F_q^*$.

We start with two main examples of rank three graphs. The first example was discovered in \cite{LS} and will be referred to as the Van Lint - Schrijver graph. It is described in the statement below which in fact coincides with Lemma 1 in \cite{LS}. We give here the proof to make the text self-contained.

\begin{prop}\label{200320b}
Let $p$ and $\ell$ be two primes satisfying ${\rm ord}_\ell(p)=\ell-1$.
Pick an arbitrary positive integer $k$ and set $q := p^{(\ell-1)k}$.
Let $\omega$ be a generator of $\F_q^*$. Then the subgroup $G_0=\langle \hat{\omega}^\ell, \phi\rangle\leq \Gamma L_1(q)$ has two orbits on $\F_q^*$:
$\langle \omega^\ell\rangle, \F_q^*\setminus \langle \omega^\ell\rangle$.
\end{prop}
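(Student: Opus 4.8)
The plan is to exploit the coset structure of $\langle\omega^\ell\rangle$ inside $\F_q^*$ and to analyze separately how the two generators $\hat{\omega}^\ell$ and $\phi$ of $G_0$ act on these cosets. First I would record the basic arithmetic fact that $\ell\mid q-1$: since $\ord_\ell(p)=\ell-1$, Fermat gives $p^{\ell-1}\equiv 1\pmod\ell$, whence $q=p^{(\ell-1)k}\equiv 1\pmod\ell$. As $\ell$ is prime and divides $q-1$, the subgroup $\langle\omega^\ell\rangle$ of $\ell$-th powers has index exactly $\ell$ in the cyclic group $\F_q^*$, and its cosets are $\omega^i\langle\omega^\ell\rangle$ for $i=0,1,\dots,\ell-1$, each of size $(q-1)/\ell$.

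Next I would pin down the action of each generator. Multiplication by $\hat{\omega}^\ell$ preserves every coset $\omega^i\langle\omega^\ell\rangle$ and acts transitively on it, since $\langle\hat{\omega}^\ell\rangle$ has order $(q-1)/\ell$ and acts freely, hence regularly, on each coset. The Frobenius $\phi\colon x\mapsto x^p$ sends $\omega^i$ to $\omega^{ip}$, so it maps the coset labelled $i$ to the coset labelled $ip\bmod\ell$; equivalently, on the quotient $\F_q^*/\langle\omega^\ell\rangle\cong\Z/\ell\Z$ the map $\phi$ acts as multiplication by $p$. This identification is the structural heart of the argument.

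From here both claims follow quickly. The coset $i=0$, namely $\langle\omega^\ell\rangle$ itself, is fixed by $\phi$ (as $0\cdot p\equiv 0$) and is a single $\langle\hat{\omega}^\ell\rangle$-orbit, so it constitutes one $G_0$-orbit. For the complement, the hypothesis $\ord_\ell(p)=\ell-1$ says that $p$ generates $(\Z/\ell\Z)^*$, so multiplication by $p$ permutes the nonzero residues $\{1,\dots,\ell-1\}$ in a single cycle. Thus, starting from any element of a nonzero coset, repeated application of $\phi$ reaches every nonzero coset, while $\hat{\omega}^\ell$ fills out each coset completely; therefore $\F_q^*\setminus\langle\omega^\ell\rangle$ is a single $G_0$-orbit. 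Since these two blocks partition $\F_q^*$, the group $G_0$ has exactly the two stated orbits.

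The main obstacle, such as it is, is purely bookkeeping: one must confirm that $\phi$ descends to a well-defined map on the quotient $\F_q^*/\langle\omega^\ell\rangle$ and identify it correctly as multiplication by $p$, and then invoke the primitive-root hypothesis at precisely the right point to obtain transitivity on the $\ell-1$ nonzero cosets. No estimates or case analysis are required; once the quotient action is understood, the fact that the partition has exactly two parts is immediate.
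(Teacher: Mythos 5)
Your proposal is correct and follows essentially the same route as the paper's own proof: both reduce the problem to the action of $G_0$ on the $\ell$ cosets of $C=\langle\omega^\ell\rangle$, note that $\hat{\omega}^\ell$ fixes each coset (acting transitively on it) while $\phi$ acts on the coset labels as multiplication by $p$ on $\Z_\ell$, and then invoke ${\rm ord}_\ell(p)=\ell-1$ to see that this permutation has exactly the two orbits $\{0\}$ and $\Z_\ell^*$. The only cosmetic difference is your slightly more direct derivation of $\ell\mid q-1$ (via $q\equiv 1\ ({\rm mod}\ \ell)$ from Fermat, rather than the paper's factorization $p^{\ell-1}-1\mid q-1$), which changes nothing of substance.
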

\begin{proof}
It follows from $q= \left(p^{\ell-1}\right)^k$ that 
$p^{\ell-1} - 1$ divides $q-1$. Since $\ell$ divides $p^{\ell-1} - 1$, we conclude that $\ell\,|\,q-1$. Thus $C:=\langle \omega^\ell\rangle$ is a subgroup of $\F_q^*$ of index $\ell$. 

The subgroup $\widehat{C}$ has $\ell$ orbits on $\F_q^*$, namely:
$$C, C\omega,...,C\omega^{\ell-1}.$$
The permutation $\hat{\omega}^\ell=\widehat{\omega^\ell}$ fixes each of the orbits setwise while the action of $\phi$ on them is equivalent to the permutation of 
$i\mapsto pi,i\in\Z_\ell$. It follows from the assumption ${\rm ord}_\ell(p)=\ell-1$ that the permutation $i\mapsto pi,i\in\Z_\ell$ has two orbits on $\Z_\ell$, namely:$\{0\}$ and $\Z_\ell^*$. Therefore $\phi$ fixes the orbit $C=\langle\omega^\ell\rangle$ and permutes transitively the others. 
\end{proof}
{\bf Remark.} The subgroup $G_0=\langle \widehat{\omega^\ell}, \phi\rangle$ mentioned in Proposition~\ref{200320b} has a prime index $\ell$ in $\Gamma L_1(q)$, and, therefore, is maximal. The graph $\cay{\F_q^+}{C}$ is a particular case of  a generalized Paley graph \cite{KP}. Since $\cay{\F_q^+}{C}$ is a rank three graph, it is strongly regular. In the case of $\ell=2$ it coincides with the classical Paley graph, see e.g. \cite{J}.  If $\ell > 2$, then  the graph is a Latin square graph for odd values of $k$ and a Negative Latin Square graph if $k$ is even \cite{LS}.

\begin{prop}\label{200320c} Let $p\equiv\,3({\rm mod}\ 4)$ be a prime and $r$ an even positive integer. Denote by $\omega$ a generator of 
$\F_q^*$ where $q=p^r$. Then the subgroup $G_0=\langle \widehat{\omega^4},\phi\widehat{\omega}\rangle$ has two orbits on $\F_q^*$:
$C\cup C\omega$ and $C\omega^2\cup C\omega^3$ where $C:=\langle\omega^4\rangle$
\end{prop}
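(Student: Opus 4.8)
The plan is to reduce everything to the action of $G_0$ on the four cosets of $C=\langle\omega^4\rangle$ in $\F_q^*$, and then to an explicit computation in $\Z_4$. First I would record that the hypotheses force $4\mid q-1$: since $p\equiv 3\equiv -1\pmod 4$ and $r$ is even, $q=p^r\equiv(-1)^r=1\pmod 4$. Hence $C=\langle\omega^4\rangle$ has index exactly $4$ in the cyclic group $\F_q^*=\langle\omega\rangle$, and its cosets $C,\,C\omega,\,C\omega^2,\,C\omega^3$ partition $\F_q^*$. These four cosets are precisely the orbits of $\widehat{C}=\langle\widehat{\omega^4}\rangle\le G_0$, and I would check that the second generator also respects this partition: $\phi$ sends $C\omega^i\mapsto C\omega^{pi}$ (because $(\omega^{4k+i})^p=\omega^{4(kp)}\omega^{ip}$) and $\widehat\omega$ sends $C\omega^i\mapsto C\omega^{i+1}$, so each generator, and therefore all of $G_0$, permutes the four cosets. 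Identifying $C\omega^i$ with $i\in\Z_4$ then gives a homomorphism from $G_0$ to $\sym{\Z_4}$.

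Next I would compute this induced action. The generator $\widehat{\omega^4}$ acts trivially on $\Z_4$ (it fixes every coset), while $\phi\widehat\omega$ sends $\omega^i\mapsto(\omega^i)^p\omega=\omega^{pi+1}$ and hence induces $i\mapsto pi+1$ on $\Z_4$; using $p\equiv-1\pmod 4$ this is the involution $\tau\colon i\mapsto 1-i=(0\ 1)(2\ 3)$. Consequently the image of $G_0$ in $\sym{\Z_4}$ is $\langle\tau\rangle=\{\mathrm{id},\tau\}$, whose orbits on $\Z_4$ are $\{0,1\}$ and $\{2,3\}$. Since $\widehat{C}\le G_0$ already has the cosets as its orbits, every $G_0$-orbit on $\F_q^*$ is a union of cosets of $C$; moreover $C\cup C\omega$ and $C\omega^2\cup C\omega^3$ are $G_0$-invariant, and no element of $G_0$ moves a coset of the first block to a coset of the second, so these two sets lie in distinct $G_0$-orbits.

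It then remains to prove transitivity inside each block. Here I would use that $\widehat{C}\le G_0$ is transitive on each single coset $C\omega^i$ (the $\widehat{\omega^4}$-orbit of $\omega^i$ is $\omega^i C=C\omega^i$). Since $\phi\widehat\omega$ maps the point $1\in C$ to $\omega\in C\omega$ (coset $0\mapsto 1$), the cosets $C$ and $C\omega$ lie in one $G_0$-orbit, and because each is a full $\widehat{C}$-orbit the whole set $C\cup C\omega$ is a single $G_0$-orbit; applying $\phi\widehat\omega$ to $\omega^2$ gives $\omega^{2p+1}\in C\omega^3$ (coset $2\mapsto 3$), so $C\omega^2\cup C\omega^3$ is likewise a single orbit. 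These two orbits partition $\F_q^*$, which is exactly the assertion.

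The only delicate point is the sign computation in the second paragraph: the statement holds precisely because $p\equiv-1\pmod 4$ makes $\phi\widehat\omega$ act on $\Z_4$ as $i\mapsto 1-i$, which \emph{preserves} the partition $\{\{0,1\},\{2,3\}\}$. A wrong composition order for $\phi$ and $\widehat\omega$ (or a different residue of $p$) would instead produce $(0\ 3)(1\ 2)$ and swap the two blocks, yielding the wrong pair of orbits; so this is the step to carry out with care, and it is where the hypothesis $p\equiv3\pmod 4$ is genuinely used (the parity of $r$ having already been used to guarantee $4\mid q-1$).
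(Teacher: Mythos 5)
Your proof is correct and follows essentially the same route as the paper's: the four cosets of $C$ are the orbits of $\widehat{C}=\langle\widehat{\omega^4}\rangle$, and the second generator $\phi\widehat{\omega}$ induces the involution $C\leftrightarrow C\omega$, $C\omega^2\leftrightarrow C\omega^3$ on them, which gives the two claimed orbits. You merely spell out the details the paper leaves implicit (that $4\mid q-1$, the explicit computation $i\mapsto pi+1$ on $\Z_4$, and transitivity within each block), including the composition-order caveat, all of which check out.
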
 
\begin{proof} The subgroup $\widehat{C}=\langle\widehat{\omega^4}\rangle$ has $4$ orbits on $\F_q^*$: $C,C\omega,C\omega^2,C\omega^3$. The element $\phi\widehat{\omega}$ permutes these orbits in the following way: $C\leftrightarrow C\omega, C\omega^2\leftrightarrow C\omega^3$. Now the claim follows.
\end{proof}
\noindent{\bf Remark.} Note that the group $G$ in Proposition~\ref{200320c} has index $4$ in $\Gamma L_1(q)$. There are three ways to coarsen the partition  $C,C\omega,C\omega^2,C\omega^3$ into a partition with two classes of equal size:
$$
\{C\cup C\omega^2,C\omega\cup C\omega^3\},
\{C\cup C\omega,C\omega^2\cup C\omega^3\},
\{C\cup C\omega^3,C\omega^2\cup C\omega\}.
$$
The first partition is an orbit partition of the subgroup $\langle \widehat{\omega^2},\phi\rangle$ and yields the Paley graph, the second and the third one are orbit partitions of the groups $G_0$ and $G_0^\phi$, respectively. The latter two partitions were explicitly constructed by Peisert in \cite{P}. All three partitions yield strongly regular graphs with Paley parameters.

The theorem below proves that the examples described above are the only one dimensional affine rank three graphs.  
\begin{theorem}\label{280320a}
Let $\Gamma$ be a one dimensional affine rank three graph with
point set $\F_q$. Then, up to a complement, $\Gamma$ is either the Van Lint - Schrijver, the Paley  or the Peisert graphs.
\end{theorem}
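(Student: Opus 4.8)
The plan is to translate the whole problem into a question about the orbits of a single affine transformation of $\Z_d$, and then to settle that question completely by elementary number theory. Given the undirected rank three graph $\Gamma$, I would fix one of its two orbits $S := O_1$ as the connection set and replace $G_0$ by the setwise stabiliser $M := \{h \in \Gamma L_1(q) : hS = S\}$. Since $M \supseteq G_0$ and $M$ preserves both $S$ and $\F_q^*\setminus S$, it is again a two orbit group realizing the same partition, so I may assume $G_0 = M$. Put $C := \{x \in \F_q^* : xS = S\}$, so that $M \cap \widehat{\F_q^*} = \widehat C$, and set $d := [\F_q^* : C]$. Then $\widehat C \trianglelefteq M$ has orbits the $d$ cosets $C, C\omega,\dots,C\omega^{d-1}$, which $M$ permutes; identifying them with $\Z_d$, a generator $\widehat{\omega^t}\phi^f$ of $M/\widehat C$ acts as the affine map $\sigma : i \mapsto p^f i + t$ (using $\phi(C\omega^i)=C\omega^{pi}$ and $\widehat{\omega^t}(C\omega^i)=C\omega^{i+t}$). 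Thus $\Gamma$ is rank three exactly when $\langle\sigma\rangle$ has precisely two orbits on $\Z_d$, and $S$ is the union of the cosets indexed by one orbit.

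The decisive consequence of maximality is that $\sigma^e = \mathrm{id}$, where $e := \ord_d(p^f)$. Indeed $\sigma^e$ is the translation $i\mapsto i+c$ with $c = t\,(1+p^f+\dots+p^{f(e-1)})$; if $c\not\equiv 0 \pmod d$, then the index set $\mathcal O\subseteq\Z_d$ of $S$ satisfies $\mathcal O+c=\mathcal O$, so the pure multiplication $\widehat{\omega^c}$ would also stabilise $S$, forcing $\omega^c\in C=\langle\omega^d\rangle$ and $d\mid c$, a contradiction. Hence I am reduced to classifying two orbit cyclic groups $\langle\sigma\rangle$ with $\sigma(i)=ai+t$, $a=p^f$ of order $e$ modulo $d$, on $\Z_d$. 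I would split on whether $\sigma$ has a fixed point, equivalently whether $g:=\gcd(a-1,d)$ divides $t$. If a fixed point exists, translating it to $0$ conjugates $\sigma$ to the linear map $i\mapsto ai$, which has exactly two orbits precisely when $\Z_d$ is a field and $a$ generates $\Z_d^{\times}$, i.e. $d$ is prime and $a$ is a primitive root modulo $d$: the Paley graph when $d=2$, and the Van Lint--Schrijver / generalized Paley graph of Proposition~\ref{200320b} when $d=\ell$ is an odd prime (and $\ord_\ell(p^f)=\ell-1$ then forces $\ord_\ell(p)=\ell-1$ and $(\ell-1)\mid r$).

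If $\sigma$ has no fixed point then $a\not\equiv 1$ and $g\ge 2$. Writing the orbit sizes as $s_1\le s_2$, one has $s_i\mid e$, $\mathrm{lcm}(s_1,s_2)=e$ and $s_1+s_2=d$; since $e=\ord_d(a)<d$, the inequality $\mathrm{lcm}(s_1,s_2)<s_1+s_2$ forces $s_1\mid s_2$, hence $s_2=e$ and $d=s_1+e\le 2e$. This fixed point free case is the main obstacle, and it is exactly where one cannot linearise $\sigma$; I would resolve it arithmetically. From $d\le 2e\le 2\lambda(d)$ (Carmichael $\lambda$) one gets $\lambda(d)\ge d/2$, and a direct check shows this forces $d\in\{1,2,4\}$ or $d$ an odd prime power. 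Odd prime powers $d=p^b$ are then excluded: $g\ge 2$ gives $a\equiv 1\pmod p$, so $e$ is a power of $p$ and $s_1\mid e$ is a power of $p$, whence $d=s_1+e$ becomes an impossible identity $p^c+p^{b'}=p^b$. Hence $d=4$, $e=2$, $a\equiv -1\pmod 4$ and $t$ odd, with orbits $\{\overline 0,\overline 1\}$ and $\{\overline 2,\overline 3\}$; here $S$ is the union of two cosets of the index four subgroup, i.e. the Peisert graph of Proposition~\ref{200320c} (so $p\equiv 3\pmod 4$ and $r$ even).

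Finally I would pass from the orbit partition back to the graph: interchanging the two orbits $O_1,O_2$ is precisely taking the complement, which is what ``up to a complement'' records. The undirectedness of $\Gamma$ — that negation, acting on $\Z_d$ as the translation by $\tfrac{q-1}{2}\bmod d$, preserves each orbit rather than swapping them — I expect to be automatic under the parameter constraints found above (for Peisert, $r$ even makes $q$ a square, so $q\equiv 1\pmod 8$ and $S=-S$), and where it fails the partition yields only a tournament and hence no undirected $\Gamma$. This completes the identification of every one dimensional affine rank three graph with a Paley, Van Lint--Schrijver, or Peisert graph.
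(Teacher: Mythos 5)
Your proof is correct, and although it opens with the same two reductions as the paper — factoring out the multiplicative stabiliser $C$ of the connection set (the paper's radical $\rad(O_1)$) and exploiting that the induced group on the $d=[\F_q^*:C]$ cosets contains no non-trivial translation — the core of your argument runs along a genuinely different line. The paper's Lemma~\ref{140320b} classifies an \emph{arbitrary} two-orbit subgroup $Y\leq\Z_n\rtimes\langle\alpha\rangle$: it first forces $n$ to be a prime power (cosets of a prime-order subgroup are blocks meeting both orbits, so $n/p$ divides both orbit lengths), then squeezes $|Y|$ between $\lcm(|O_1|,|O_2|)$ and $\varphi(n)$ to get $|O_1|=n/p$ and $Y\cong\Z_n^*$ cyclic, and finally splits on the parity of $p$, invoking that a non-trivial $p'$-element of $\mathsf{Hol}(\Z_{p^t})$ fixes a unique point. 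You instead pass to the full setwise stabiliser $M$, so that the induced group is generated by a single affine map $\sigma(i)=p^fi+t$ of $\Z_d$ satisfying $\sigma^{\ord_d(p^f)}=\mathrm{id}$ (your maximality argument), and then split on whether $\sigma$ has a fixed point: linearisation gives $d$ prime with $p^f$ a primitive root (Paley and Van Lint--Schrijver, Proposition~\ref{200320b}), while the fixed-point-free case is killed by the arithmetic $s_1\mid s_2=e$, $d=s_1+e\leq 2e\leq 2\lambda(d)$ (Carmichael $\lambda$) plus the exclusion of odd prime powers, leaving only $d=4$ (Peisert, Proposition~\ref{200320c}). Your route trades the paper's holomorph fixed-point argument for elementary arithmetic, and it is more robust on one point: your fixed point $x_0$ need not be $0$, i.e.\ the small orbit may be a coset $C\omega^{x_0}$ rather than $C$ itself — a possibility the statement of Lemma~\ref{140320b} elides when it asserts $O_1=m\Z_n$ (already the stabiliser of a non-zero point gives a two-orbit group whose singleton orbit is not $\{0\}$), and which is harmless only because $x\mapsto\omega^{-x_0}x$ is an isomorphism from $\cay{\F_q^+}{C\omega^{x_0}}$ onto $\cay{\F_q^+}{C}$. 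What the paper's lemma buys in exchange is a standalone classification valid for all two-orbit subgroups, not only maximal ones. To finish your write-up you should state that scaling isomorphism explicitly, and note that in the $d=4$ case the value $t\equiv 3\ ({\rm mod}\ 4)$ gives the other partition $\{C\cup C\omega^3,\,C\omega\cup C\omega^2\}$, the orbit partition of $G_0^\phi$, which is still a Peisert graph.
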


The group $\Gamma L_1(q)$ is a semidirect product $\F_q^*\rtimes\langle \phi\rangle$ which acts naturally on $\F_q^*$. So, we may consider a bit more general action, namely, the one of the semidirect product $X:=\Z_n\rtimes\langle\alpha\rangle,\alpha\in\aut{\Z_n}$  on $\Z_n$:
$$u^{(a,\alpha^i)} = \alpha^i (u) + a.$$ 
Pick an arbitrary 
subgroup $Y\leq X$ which has exactly two orbits on $\Z_n$, say $O_1$ and $O_2$ (we do not exclude the case when one of them is a singleton). 
Our goal is to describe all partitions  of $\Z_n$ appearing in this way. Then we apply the obtained classification to prove Theorem~\ref{280320a}.
 
To formulate the key lemma we need one more definition. 
A {\it radical} $\rad(S)$ of a subset $S\subseteq \Z_n$ is defined as the subgroup of $\Z_n$ consisting of all $u\in\Z_n$ satisfying $S+u=S$, \cite{MP}.
Note that for any partition $\Z_n=O_1\cup O_2$ we have that $\rad(O_1)=\rad(O_2)$. 

Note that every subgroup $N$ of $\Z_n$ is normal in $X$. Its orbits coincide with its cosets and form an imprimitivity system for $X$. 
\begin{lemma}\label{140320b} Let $Y\leq X$ be a subgroup having two orbits on $\Z_n$, say $O_1$ and $O_2$,  $|O_1|\leq |O_2|$. Denote by $m$ the index of $\rad(O_1)$ in $\Z_n$.
Then one of the following holds
\begin{enumerate}
\item $m$ is prime, $O_1=m\Z_n$
 and 
$\alpha$ permutes transitively non-zero elements of the factor group $\Z_n/m\Z_n\cong\Z_m$.
\item $m=4$, $|O_1| = |O_2|$, $O_1$ is a union of two cosets of $4\Z_n$, $\alpha(x+4\Z_n) = -x+4\Z_n$  and $\{O_1,O_2\} = \{4\Z_n\cup i+4\Z_n,   2+4\Z_n\cup -i+4\Z_n\}, i\in\{1,3\}$.
\end{enumerate} 
\end{lemma}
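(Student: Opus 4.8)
The plan is to collapse the whole configuration onto a single cyclic affine permutation of $\Z_m$ and then branch according to whether that permutation has a fixed point.

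First I would pass to the quotient by the radical. Put $N=\rad(O_1)$, which by the remark preceding the lemma equals $\rad(O_2)$ and has index $m$. Since $\Z_n$ is cyclic it has a unique subgroup of each order, so $N$ is $\alpha$-invariant, hence (as already noted) normal in $X$ with its cosets forming a block system. Passing to $\overline{X}=\Z_m\rtimes\langle\overline\alpha\rangle$ acting on $\Z_n/N\cong\Z_m$, the images $\overline{O}_1,\overline{O}_2$ are the two orbits of $\overline Y=YN/N$, with $|\overline{O}_1|\le|\overline{O}_2|$ and, crucially, $\rad(\overline{O}_1)=\{0\}$ (a nonzero period of $\overline{O}_1$ would lift to a period of $O_1$ lying outside $N$). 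Any translation in $\overline Y$ preserves $\overline{O}_1$, so $\overline Y\cap\Z_m\le\rad(\overline{O}_1)=\{0\}$; therefore $\overline Y$ injects into $\overline X/\Z_m=\langle\overline\alpha\rangle$ and is cyclic, say $\overline Y=\langle g\rangle$ with $g\colon x\mapsto cx+a$, where $c\in\Z_m^*$ is a power of $\overline\alpha$. If $g$ has a fixed point $x_0$, then $\{x_0\}$ is one of the two orbits, forcing $\overline{O}_1=\{x_0\}$ and $\overline{O}_2=\Z_m\setminus\{x_0\}$ to be a single orbit. Conjugating by the translation carrying $x_0$ to $0$ (which changes neither the linear part $c$ nor the hypotheses) makes $g$ linear, $x\mapsto cx$; then $\Z_m\setminus\{0\}$ being one orbit of multiplication by $c$ traps the orbit of $1$ inside $\Z_m^*$, so every nonzero element is a unit, i.e. $m$ is prime and $\langle c\rangle=\Z_m^*$. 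As $c$ is a power of $\overline\alpha$, the latter is transitive on $\Z_m\setminus\{0\}$. Back in $\Z_n$, $O_1$ is a single coset of the prime-index subgroup $m\Z_n$, equal to $m\Z_n$ itself after this translation (a harmless normalization, since for the Cayley-graph application rescaling the connection set gives an isomorphic graph); this is conclusion (1).

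The remaining fixed-point-free case is the heart of the argument, and I expect it to be the main obstacle. First I would rule out unequal orbits: if $s_1=|\overline{O}_1|<s_2=|\overline{O}_2|$, both dividing $t=|\overline Y|$, then the point-stabilizer in $\overline{O}_1$ is $\langle g^{s_1}\rangle$, so $g^{s_1}$ fixes all of $\overline{O}_1$ but, because $s_2\nmid s_1$, no point of $\overline{O}_2$; thus $\mathrm{Fix}(g^{s_1})=\overline{O}_1$. However $g^{s_1}\neq\mathrm{id}$ is affine with linear part $c^{s_1}\neq 1$, so its fixed set is a coset of the order-$s_1$ subgroup $\{x:(c^{s_1}-1)x=0\}$, whence $\rad(\overline{O}_1)$ equals that subgroup, of size $s_1\ge 2$ — contradicting $\rad(\overline{O}_1)=\{0\}$. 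Hence $s_1=s_2=m/2$, every point-stabilizer equals $\langle g^{m/2}\rangle$, and $g^{m/2}=\mathrm{id}$, so $\langle g\rangle$ is semiregular with $\ord_m(c)=t=m/2$.

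What is left is an arithmetic squeeze. Now $\Z_m^*$ contains an element of order $m/2$, so $m$ is even and $m/2\le\varphi(m)$; but for even $m$ one has $\varphi(m)\le m/2$, with equality only when $m=2^k$. Thus $m=2^k$ and $c$ generates $\Z_{2^k}^*$, which is cyclic only for $k\le 2$; since $k=1$ would make $g=\mathrm{id}$ (a fixed point), we obtain $m=4$ and $c=-1$, whence $\overline\alpha$ is negation on $\Z_4$. A fixed-point-free map $x\mapsto -x+a$ then has $a$ odd and swaps $0\leftrightarrow a$, so its orbits on $\Z_4$ are $\{0,\overline a\}$ and $\{2,-\overline a\}$; pulling these back to $\Z_n$ yields exactly the partition of conclusion (2) with $i=\overline a\in\{1,3\}$. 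The whole difficulty is concentrated in this branch: the radical computation forcing equal orbit sizes, and the $\varphi$-squeeze together with the non-cyclicity of $\Z_{2^k}^*$ for $k\ge 3$ pinning $m$ down to $4$, are where all the content lies.
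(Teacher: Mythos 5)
Your proof is correct, and after the shared opening move it diverges from the paper's argument in an essential way. Both proofs begin identically: quotient by $\rad(O_1)$ to reduce to the trivial-radical case, then use $\overline{Y}\cap\Z_m\le\rad(\overline{O}_1)=\{0\}$ to embed $\overline{Y}$ into $\langle\overline{\alpha}\rangle$, so that $\overline{Y}$ is cyclic. The paper then proceeds structurally: for every prime $p\mid n$ the cosets of $\frac{n}{p}\Z_n$ are blocks, and triviality of the radical forces $\frac{n}{p}$ to divide both orbit lengths, so $n=p^t$ is a prime power; a counting squeeze (the least common multiple of the orbit lengths divides $|Y|\le\varphi(n)<n$) pins the orbit lengths to $p^{t-1}$ and $p^{t-1}(p-1)$ and identifies $Y\cong\Z_n^*$, $X\cong\mathsf{Hol}(\Z_n)$; finally, for odd $p$ the element $y^{p^{t-1}}$ is a nontrivial $p'$-element of $\mathsf{Hol}(\Z_{p^t})$ with at least $p^{t-1}$ fixed points, and since such an element has exactly one fixed point (a fact the paper invokes without proof), $t=1$, while for $p=2$ cyclicity of $\Z_{2^t}^*$ gives $t\le2$. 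You never need the prime-power reduction or the holomorph: you split on whether the generator $g:x\mapsto cx+a$ has a fixed point. In the fixed-point branch, the complement of the singleton orbit is the multiplicative orbit of $1$, hence consists of units, so $m$ is prime — conclusion (1) in one stroke, uniformly in $m$. In the fixed-point-free branch, your observation that unequal orbit lengths would make $\mathrm{Fix}(g^{s_1})=\overline{O}_1$ a coset of a subgroup of order $s_1\ge 2$, contradicting $\rad(\overline{O}_1)=\{0\}$, replaces the paper's $p'$-element argument; then $\ord_m(c)=m/2$ (legitimate, since the linear-part map is injective on $\overline{Y}$) and the squeeze $m/2\le\varphi(m)\le m/2$ force $m=2^k$ with $\Z_{2^k}^*$ cyclic, hence $m=4$. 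Your route is more self-contained — every ingredient is verified on the spot — and its dichotomy maps exactly onto the lemma's two conclusions; the paper's route yields stronger structural by-products ($Y\cong\Z_n^*$, $X\cong\mathsf{Hol}(\Z_n)$) that the lemma itself does not need.

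One further point, which you handled more carefully than the paper: in the fixed-point branch the argument only shows that $O_1$ is \emph{some} coset of the prime-index subgroup $m\Z_n$; the equality $O_1=m\Z_n$ costs a conjugation of $Y$ by a translation. This is not pedantry: $Y=\langle x\mapsto 2x+1\rangle\le\mathsf{Hol}(\Z_3)$ has orbits $\{2\}$ and $\{0,1\}$, so the literal equality can genuinely fail. The paper's proof establishes exactly what yours does here but states the normalized conclusion without comment. Your explicit flag, together with the remark that rescaling a connection set by a unit does not change the isomorphism type of the Cayley graph, is precisely what the application to Theorem~\ref{280320a} requires.
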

\begin{proof} Write $\rad(O_1) = m\Z_n$.
If $m\Z_n$ is nontrivial, then we can replace the original action of $X$ on $\Z_n$ by the action on the cosets $\Z_n/m\Z_n\cong\Z_m$. The resulting permutation group will be $\overline{X}=\Z_{\overline{n}}\rtimes\langle\overline{\alpha}\rangle$ where $\overline{n}=m$ and $\overline{\alpha}$ is the automorphism of $\Z_m$ induced by $\alpha$ . The image $\overline{Y}$ has two orbits on $\Z_{\overline{n}}$, namely: $\overline{O_1}$ and $\overline{O_2}$ where $\overline{O_j}=O_j/m\Z_n$. Note that $\rad(\overline{O_j})$ is trivial now. This observation reduces the general case to the one with a trivial radical.

Thus starting from now we assume that $\rad(O_1)$ is trivial, i.e. $m=n$.
Pick an arbitrary prime divisor $p$ of $n$ and consider the subgroup $P:=\frac{n}{p}\Z_n,|P|=p$ . The cosets $i+P,i=0,...,n/p-1$ are blocks of $X$ and, therefore, are blocks of $Y$ too. Thus $(i+P)\cap O_j$ are blocks of the transitive action of $Y$ on $O_j,j=1,2$.
If some coset $i+P$ is contained in some $O_j$, then, it follows from transitivity of $Y$ on $O_j$ that $O_j$ is a union of $P$-cosets, contrary to the assumption $\rad(O_j)=\{0\}$. Hence, each coset $i+P$ intersects every orbit $O_j$ non-trivially. Since
$(i+P)\cap O_j$ is a block of $Y$ and $Y$ is transitive on $O_j$, the number $|(i+P)\cap O_j|$ depends only on $j$. Therefore $|O_j|=\frac{n}{p}|(i+P)\cap O_j|$, and, consequently, $\frac{n}{p}$ divides $|O_j|$. If $n$ has two distinct prime divisors, say $p$ and $q$, then 
$|O_j|$ is divisible by $\frac{n}{p}$ and $\frac{n}{q}$. Therefore $|O_j|$ is divisible by $\lcm(\frac{n}{p},\frac{n}{q})=n$, a contradiction. Thus $n=p^t$ for some prime $p$ and $|O_j| = p^{t-1} x_j$ with $x_1+x_2=p$ and $x_1\leq x_2$. 

Since $O_1$ has trivial radical, the group $Y\cap\Z_n$ is trivial. Therefore, $Y\cong Y\Z_n/\Z_n\hookrightarrow \langle \alpha\rangle$. In particular, $|Y|\leq |\langle\alpha\rangle|\leq\varphi(n) < n$. On the other hand $|Y|$ is divisible by $\lcm(|O_1|,|O_2|)=\frac{n}{p}\lcm(x_1,x_2)=\frac{n}{p}x_1x_2$. Hence $\frac{n}{p}x_1x_2 < n\implies x_1x_2 < p \implies x_1 =1, x_2=p-1$. Therefore 
$(p-1)\frac{n}{p} = \frac{n}{p}x_1x_2 \leq |Y| \leq\varphi(n)=\frac{n}{p}(p-1)\implies |Y|=\varphi(n)=\frac{n}{p}(p-1)\implies Y\cong\langle\alpha\rangle\cong\Z_n^*\implies X\cong\mathsf{Hol}(\Z_n)$.
 
Let $y$ be a generator of $Y$. Since $O_1$ and $O_2$ are the only orbits of $Y=\langle y\rangle$, the permutation $y$ has two cycles in its cyclic decomposition.
One of length $|O_1|$ and another one of length $|O_2|$. It follows from $|O_1|=p^{t-1},|O_2|=p^{t-1}(p-1)$ that $y^{p^{t-1}}$ acts trivially on $O_1$. 
Therefore $y^{p^{t-1}}$ has at least $p^{t-1}$ fixed points. 
If $p$ is odd, then 
$y^{p^{t-1}}$ is a non-trivial $p'$-element of $\mathsf{Hol}(\Z_n)$. Therefore it has only one fixed point, and, consequently, $t=1$.

Assume now that $p=2$. Then, as it was shown before, $\Z_{2^t}^*$ is cyclic. This is possible only when $t=1$ or $t=2$. The rest follows  easily.
\end{proof}

\noindent {\bf Proof of Theorem~\ref{280320a}.} As in Lemma~\ref{140320b} we
assume that $|O_1|\leq|O_2|$. Assume that case  (1) of the Lemma occurs. Then
$O_1$ is a subgroup of $\F_q^*$ of prime index $\ell$ and $\phi$ permutes transitively $\ell-1$ cosets $O_1\omega,...,O_1\omega^{\ell-1}$. Therefore the induced action of $\phi$ on the cosets:  
$O_1\omega^i\mapsto O_1\omega^{ip},i\in\Z_\ell^*$ is a full cycle. This implies that ${\rm ord}_\ell(p)=\ell-1$. Together with $p^r\equiv 1({\rm mod}\,\ell)$ this implies $\ell-1\,|\,r$. Thus $O_1$ is the set described in Proposition~\ref{200320b}.

Assume now that the second case of the Lemma~\ref{140320b} occurs. Then both $O_1$ and $O_2$ are unions of two cosets of an index $4$ subgroup $C\leq\F_q^*$ and $(C\omega)^\phi = C\omega^3$. The latter equality is equivalent to 
$C\omega^p = C\omega^3$, and, therefore, implies $p\equiv 3({\rm mod}\,4)$. Together with $p^r\equiv 1({\rm mod}\,4)$ this yields $2\,|\,r$. This yields us the  examples described in Proposition~\ref{200320c}.\hfill $\square$

\vspace*{10mm}

\centerline{\bf Acknowledgments}

\vspace*{5mm}

The author is very grateful to I. Ponomarenko who attracted the author's attention to  the topic and read carefully the first draft. I am also thankful to A. Vasil'ev for careful reading and to M. Klin for useful discussions.


\begin{thebibliography}{10}
\bibitem{BM} Andries E. Brouwer  and Hendrik van Maldeghem, Strongly Regular Graphs, \url{https://homepages.cwi.nl/~aeb/math/srg/rk3/srgw.pdf.}

\bibitem{FK} D. A. Foulser and M. J. Kallaher, Solvable, flag-transitive, rank 3 collineation groups, Geom. Dedicata 7 (1978) 111-130.
\bibitem{J} Gareth Jones, Paley and the Paley Graphs, In: Isomorphisms, Symmetry
and Computations in Algebraic Graph Theory, Springer Proceedings in Mathematics \& Statistics, 305 (2020), 155-184. 
\bibitem{KP} Tian Khoon Lim and Cheryl E. Praeger,  
On generalised Paley graphs and their automorphism groups, 
Michigan Math. J. Volume 58, Issue 1 (2009), 293-308.
\bibitem{LS} J. H. van Lint and A. Schrijver, Constructions of strongly regular graphs, two-weight
codes and partial geometries by finite fields, Combinatorica 1 (1981) 
63-73.\bibitem{MP} Mikhail Muzychuk and  Ilia Ponomarenko, Schur rings,
European Journal of Combinatorics 30 (2009) 1526-1539.
\bibitem{P} W. Peisert, All Self-Complementary Symmetric Graphs, Journal of Algebra 240 (2001), 209-229. 
\end{thebibliography}
\end{document}